\DeclareMathOperator{\Rea}{Re}
 \newtheorem{theorem}{Theorem}[section]
 \newtheorem{cor}[theorem]{Corollary}
 \newtheorem{lemma}[theorem]{Lemma}
 \newtheorem{prop}[theorem]{Proposition}
 \theoremstyle{definition}
 \theoremstyle{remark}
 \numberwithin{equation}{section}
\begin{document}
\title[On the sum relation of multiple Hurwitz zeta functions]{On the sum relation of multiple Hurwitz zeta functions}

\author[C.-L.~Chung]{Chan-Liang Chung}

\address{Institute of Mathematics, Academia Sinica, 6F, Astronomy-Mathematics Building, No. 1, Sec. 4, Roosevelt Road, Taipei 10617, Taiwan(R.O.C.)}

\email{andrechung@gate.sinica.edu.tw}

\keywords{Hurwitz zeta function, Multiple zeta value, Multiple zeta star value, Sum formula, Generating functions, Infinite series and products.}

\date{Sep. 5, 2016}
\begin{abstract}
In this paper we shall define a special-valued multiple Hurwitz zeta functions, namely the multiple $t$-values $t(\boldsymbol{\alpha})$ and define similarly the multiple star $t$-values as $t^{\star}(\boldsymbol{\alpha})$. Then we consider the sum of all such multiple (star) $t$-values of fixed depth and weight with even argument and prove that such a sum can be evaluated when the evaluations of $t(\{2m\}^n)$ and $t^{\star}(\{2m\}^n)$ are clear. We give the evaluations of them in terms of the classical Euler numbers through their generating functions.
\end{abstract}
\maketitle
\section{Introduction and Statement of the Main Result}
Let $\boldsymbol{\alpha}=(\alpha_1, \alpha_2, \ldots, \alpha_k)$ be a $k$-tuple positive integer, we define the multiple $t$-values of depth $k$ \cite{SC11,Z15} by
\begin{equation*}
\begin{split}
t(\boldsymbol{\alpha})=t(\alpha_1, \alpha_2, \ldots, \alpha_k)=\sum_{1\leq j_1<j_2<\cdots<j_k} \frac{1}{(2j_1-1)^{\alpha_1}(2j_2-1)^{\alpha_2}\cdots (2j_k-1)^{\alpha_k}},
\end{split}
\end{equation*}
which is equal to the multiple Hurwitz zeta functions $2^{-|\boldsymbol{\alpha}|}\zeta(\alpha_1, \ldots, \alpha_k; -\frac{1}{2}, \ldots, -\frac{1}{2})$ having weight $|\boldsymbol{\alpha}|=\alpha_1+\alpha_2+\cdots+\alpha_k$. Let $(\{a\}^n)$ be the string $(a,a,\ldots, a)$ for any positive integer $a$. It is straightforward that
\[
1+\sum_{n=1}^{\infty}t(\{m\}^n)x^{mn}=\prod_{j=1}^{\infty} \left(1+\frac{x^m}{(2j-1)^m}\right).
\]

Similarly, we can define the multiple star $t$-values of depth $k$ and weight $|\boldsymbol{\alpha}|$ by
\begin{equation*}
\begin{split}
t^{\star}(\alpha_1, \alpha_2, \ldots, \alpha_k)=\sum_{1\leq j_1\leq j_2\leq \cdots\leq j_k}\frac{1}{(2j_1-1)^{\alpha_1}(2j_2-1)^{\alpha_2}\cdots (2j_k-1)^{\alpha_k}}.
\end{split}
\end{equation*}
The only change consists in considering the non-strict inequalities under the summation sign. The generating function of $t^{\star}(\{m\}^n)$ is given by
\[
\prod_{j=1}^{\infty}\left(1-\frac{x^m}{(2j-1)^m}\right)^{-1}.
\]
That is,
\[
1+\sum_{n=1}^{\infty}t^{\star}(\{m\}^n)x^{mn}=\prod_{j=1}^{\infty}\left(1-\frac{x^m}{(2j-1)^m}\right)^{-1}.
\]
In this paper, we consider the sum of all multiple $t$-value of depth $k$ and weight $mn$ with argument $m\geq 2$ as
\begin{equation*}
\begin{split}
T(mn,k)=\sum_{|\boldsymbol{\alpha}|=n}t(m\alpha_1, m\alpha_2, \ldots, m\alpha_k).
\end{split}
\end{equation*}
This is equivalent to 
\[
2^{mn}T(mn,k)=\sum_{|\boldsymbol{\alpha}|=n}\zeta\left(m\alpha_1, m\alpha_2, \ldots, m\alpha_k; -\frac{1}{2}, -\frac{1}{2}, \ldots, -\frac{1}{2}\right),
\]
and we put
\begin{equation*}
\begin{split}
T^{\star}(mn,k)=\sum_{|\boldsymbol{\alpha}|=n}t^{\star}(m\alpha_1, m\alpha_2, \ldots, m\alpha_k).
\end{split}
\end{equation*}
There is a simple connection between the evaluations of $T(mn,k)$ and $T^{\star}(mn,k)$ and it could be done by a combinatorial argument that is essentially the same as the proof of Lemma 1 in \cite{H16}.
\begin{lemma}\label{hof}
For positive integers $k\leq n$ and $m\geq 2$, we have
\begin{equation*}
\begin{split}
T^{\star}(mn,k)=\sum_{r=1}^{k} \binom{n-r}{k-r}T(mn,r).
\end{split}
\end{equation*}
\end{lemma}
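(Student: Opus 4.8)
The plan is to expand each star value as a sum of ordinary $t$-values, substitute this into the definition of $T^{\star}(mn,k)$, and then reorganize the resulting double sum by collecting all pairs that yield a common ordinary $t$-value; the final counting step will reduce to a Vandermonde convolution. Throughout, the hypothesis $m\geq 2$ guarantees absolute convergence, so all rearrangements are legitimate.

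First I would record the standard combinatorial expansion of a star value. In the defining sum for $t^{\star}(m\alpha_1,\ldots,m\alpha_k)$ the indices satisfy $j_1\leq j_2\leq\cdots\leq j_k$, and each of the $k-1$ consecutive comparisons is either ``$<$'' or ``$=$''. Choosing the positions where a strict inequality holds partitions $\{1,\ldots,k\}$ into $r$ consecutive nonempty blocks (for some $1\leq r\leq k$); within each block the indices coincide, so the corresponding denominators merge and their exponents add. Summing over the resulting strictly increasing index tuples produces an ordinary $t$-value whose arguments are the block sums, giving
\begin{equation*}
t^{\star}(m\alpha_1,\ldots,m\alpha_k)=\sum t(m\beta_1,\ldots,m\beta_r),
\end{equation*}
where the sum ranges over all groupings of $(\alpha_1,\ldots,\alpha_k)$ into $r$ consecutive nonempty blocks and $\beta_j$ denotes the sum of the $\alpha_i$ lying in the $j$-th block. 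Note $|\boldsymbol{\beta}|=|\boldsymbol{\alpha}|=n$.

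Inserting this into the definition of $T^{\star}(mn,k)$ and interchanging the order of summation, I would fix an ordinary composition $\boldsymbol{\beta}=(\beta_1,\ldots,\beta_r)$ of $n$ into $r$ parts and count the number of pairs (a $k$-part composition $\boldsymbol{\alpha}$ of $n$, a consecutive block decomposition) that produce the argument vector $(m\beta_1,\ldots,m\beta_r)$. Such a pair is precisely a refinement of $\boldsymbol{\beta}$ into a $k$-part composition: each part $\beta_j$ is written as an ordered sum of $c_j\geq 1$ positive parts with $c_1+\cdots+c_r=k$. Since the number of compositions of $\beta_j$ into $c_j$ positive parts is $\binom{\beta_j-1}{c_j-1}$, the total count is $\sum_{c_1+\cdots+c_r=k}\prod_{j=1}^{r}\binom{\beta_j-1}{c_j-1}$. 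Substituting $d_j=c_j-1$ converts this into the Vandermonde convolution $\sum_{d_1+\cdots+d_r=k-r}\prod_{j=1}^{r}\binom{\beta_j-1}{d_j}=\binom{n-r}{k-r}$, using $\sum_j(\beta_j-1)=n-r$. Crucially this multiplicity depends only on $r$, not on the particular $\boldsymbol{\beta}$, so summing over all $r$-part compositions of $n$ recovers $T(mn,r)$ weighted by $\binom{n-r}{k-r}$, and summing over $r$ from $1$ to $k$ yields the stated identity.

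The computation is elementary, so I do not expect a genuine obstacle; the only points demanding care are the bookkeeping of the block decomposition—verifying that every pair $(\boldsymbol{\alpha},\text{decomposition})$ is counted exactly once and corresponds bijectively to a refinement of some $\boldsymbol{\beta}$—and the correct application of the Vandermonde convolution in the counting step. I would present the expansion of the star value and the refinement count as the two self-contained ingredients, after which the identity follows by collecting terms.
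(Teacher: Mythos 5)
Your argument is correct and is exactly the combinatorial proof the paper has in mind: the paper omits the details and simply defers to the proof of Lemma 1 in Hoffman's paper \cite{H16}, which proceeds by the same expansion of a star value into ordinary values via merging equal indices, followed by the refinement count $\binom{n-r}{k-r}$ through the Vandermonde convolution. Your write-up supplies precisely the details the paper leaves to the reference, so there is nothing to correct.
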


Next we prove that the evaluations of $T(mn,k)$ and $T^{\star}(mn,k)$ are based on the evaluations of multiple $t$-value $t(\{m\}^p)$ and star $t$-value $t^{\star}(\{m\}^q)$. 
\begin{theorem}\label{main}
For positive integers $k\leq n$ and $m\geq 2$, we have
\begin{equation*}
\begin{split}
T(mn,k)=\sum_{p=k}^{n} (-1)^{p-k}\binom{p}{k}t(\{m\}^p)t^{\star}(\{m\}^{n-p})
\end{split}
\end{equation*}
and
\begin{equation*}
\begin{split}
T^{\star}(mn,k)=\sum_{q=k}^n (-1)^{n+q}\binom{q}{k}t(\{m\}^{n-q})t^{\star}(\{m\}^q).
\end{split}
\end{equation*}
\end{theorem}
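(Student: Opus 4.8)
The plan is to introduce an auxiliary variable $y$ that records the depth $k$ and to compute the two-variable generating function $\sum_{n\ge 1}\sum_{k=1}^n T(mn,k)\,y^k x^{mn}$ in closed form, then read off the coefficients. First I would fix the depth $k$ and sum over all compositions $\boldsymbol{\alpha}$ into $k$ positive parts. Evaluating each inner geometric series $\sum_{\alpha\ge 1}\bigl(x^m/(2j-1)^m\bigr)^{\alpha}$ gives
\[
\sum_{\alpha_1,\dots,\alpha_k\ge 1} t(m\alpha_1,\dots,m\alpha_k)\,x^{m(\alpha_1+\cdots+\alpha_k)}
=\sum_{1\le j_1<\cdots<j_k}\ \prod_{i=1}^k \frac{x^m}{(2j_i-1)^m-x^m}.
\]
Writing $u_j=x^m/\bigl((2j-1)^m-x^m\bigr)$, the right-hand side is exactly the elementary symmetric function $e_k(u_1,u_2,\dots)$; replacing the strict inequalities by non-strict ones shows that the corresponding star sum is the complete homogeneous symmetric function $h_k(u_1,u_2,\dots)$.

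Second, I would assemble over $k$ via the classical identities $\sum_k e_k(u)\,y^k=\prod_j(1+yu_j)$ and $\sum_k h_k(u)\,y^k=\prod_j(1-yu_j)^{-1}$. A short simplification of $1+yu_j$ and $1-yu_j$ yields
\[
1+\sum_{n\ge 1}\sum_{k=1}^n T(mn,k)\,y^k x^{mn}
=\prod_{j=1}^\infty\frac{1-(1-y)x^m/(2j-1)^m}{1-x^m/(2j-1)^m},
\]
and the star version is obtained by exchanging numerator and denominator and replacing $(1-y)$ by $(1+y)$.

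Third, and this is where the two given generating functions enter, I would recognize each factor. Substituting $X^m=(y-1)x^m$ into $\prod_j(1+X^m/(2j-1)^m)=1+\sum_p t(\{m\}^p)X^{mp}$ turns the numerator into $\sum_{p\ge 0}t(\{m\}^p)(y-1)^p x^{mp}$, while the denominator is $\prod_j(1-x^m/(2j-1)^m)^{-1}=\sum_{s\ge 0}t^{\star}(\{m\}^s)x^{ms}$. Multiplying the two series, the coefficient of $x^{mn}$ becomes $\sum_{p=0}^n t(\{m\}^p)t^{\star}(\{m\}^{n-p})(y-1)^p$. Expanding $(y-1)^p=\sum_k\binom{p}{k}(-1)^{p-k}y^k$ and matching the coefficient of $y^k$ against $T(mn,k)$ gives the first formula. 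The second follows identically from the star generating function, now using $X^m=-x^m$ in the numerator and $Z^m=(1+y)x^m$ in the denominator, together with the sign identity $(-1)^{n-q}=(-1)^{n+q}$.

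I expect the only real obstacle to be bookkeeping rather than conceptual: one must justify interchanging the product over $j$ with the sums over the $\alpha_i$ and over $k$, which is legitimate since all the series and products converge absolutely for $|x|$ small, so the manipulations are valid either analytically or as formal power series in $x$ and $y$. One must also track signs carefully through the substitutions $X^m=(y-1)x^m$ and $X^m=-x^m$. The creative step that makes the prescribed evaluations of $t(\{m\}^p)$ and $t^{\star}(\{m\}^q)$ appear is precisely the simplification of $\prod_j(1+yu_j)$ into the quotient form above, since that quotient is manifestly a product of the two known generating functions after the appropriate substitutions.
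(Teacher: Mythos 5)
Your proposal is correct, and it is close in spirit to the paper's proof---both are generating-function arguments built on the same infinite products $\prod_j(1+Yx^m/(2j-1)^m)$ and $\prod_j(1-Zx^m/(2j-1)^m)^{-1}$---but the mechanics differ in two respects worth noting. The paper introduces two auxiliary variables at once, forming $T_m(x;y,z)=\prod_j\bigl(1+\tfrac{yx^m}{(2j-1)^m}\bigr)\bigl(1-\tfrac{zx^m}{(2j-1)^m}\bigr)^{-1}$, expands each local factor as a geometric-type series to identify the coefficient of $x^{mn}$ with $\sum_r z^{n-r}(y+z)^r T(mn,r)$, and then extracts $T(mn,k)$ by applying $\partial^k/\partial y^k$ and setting $y=-1$, $z=1$; your route instead starts from the definition of $T(mn,k)$, sums the geometric series over each $\alpha_i$ to land on the elementary symmetric functions $e_k(u)$ in $u_j=x^m/((2j-1)^m-x^m)$, and rewrites $\prod_j(1+yu_j)$ as the quotient that factors into the two known generating functions. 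These are equivalent (your identity is the paper's equation at $z=1$ with $y$ shifted by $1$, and binomial expansion of $(y-1)^p$ plays the role of the $k$-fold differentiation), so the first formula is proved the same way in substance. The genuine divergence is in the second formula: the paper obtains it by differentiating in $z$, which produces $\sum_r\binom{n-r}{k-r}(-1)^{n-k}T(mn,r)$ on the left, and must then invoke Lemma~\ref{hof} to convert this into $T^{\star}(mn,k)$; you instead run the parallel argument with the complete homogeneous symmetric functions $h_k(u)$ and the quotient form of $(1-yu_j)^{-1}$, which yields $T^{\star}(mn,k)$ directly and makes the second identity independent of Lemma~\ref{hof} (indeed, comparing your two expansions would re-derive that lemma as a corollary). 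Your closing remarks on convergence are right: everything can be read as an identity of formal power series or justified by absolute convergence for small $|x|$, so the interchanges are harmless.
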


By Theorem \ref{main} and the evaluations of $\zeta(\{{2\}}^n)$ and $\zeta^{\star}(\{2\}^n)$ in terms of the classical Euler numbers given in Section 3 (formulas (\ref{t2}) and (\ref{ts2})), we have for positive integers $k\leq n$,
\begin{equation}\label{T2n}
\begin{split}
&T(2n,k)=\frac{(-1)^{n-k}\pi^{2n}}{4^n (2n)!}\sum_{p=k}^n \binom{2n}{2p}\binom{p}{k}E_{2n-2p};\\
&T^{\star}(2n,k)=\frac{(-1)^n\pi^{2n}}{4^n (2n)!}\sum_{q=k}^n \binom{2n}{2q}\binom{q}{k}E_{2q}.
\end{split}
\end{equation}
On the other hand, by Lemma \ref{hof} we also have
\begin{equation*}
\begin{split}
T^{\star}(2n,k)&=\sum_{r=1}^k\binom{n-r}{k-r}T(2n,r)\\
&=\frac{(-1)^n \pi^{2n}}{4^n (2n)!}\sum_{r=1}^k (-1)^r \binom{n-r}{k-r}\sum_{p=r}^n \binom{2n}{2p}\binom{p}{r}E_{2n-2p}. 
\end{split}
\end{equation*}
Therefore there is an Euler-numbers identity behind the two evaluations of $T^{\star}(2n,k)$:
\[
\sum_{r=1}^k (-1)^r\binom{n-r}{k-r}\sum_{p=r}^n \binom{2n}{2p}\binom{p}{r}E_{2n-2p}=\sum_{q=k}^n\binom{2n}{2q}\binom{q}{k}E_{2q}.
\]
Additionally, we list the evaluations of $T(4n,k)$ and $T^{\star}(4n,k)$ as follows
\begin{equation*}
\begin{split}
&T(4n,k)=\frac{(-1)^n \pi^{4n}}{4^n (4n)!}\sum_{p=0}^{n-k}\frac{(-1)^{p+k}}{4^p}\binom{n-p}{k}\binom{4n}{4p}\sum_{\ell_1+\ell_2=p}(-1)^{\ell_1}\binom{4p}{2\ell_1}E_{2\ell_1}E_{2\ell_2};\\
&T^{\star}(4n,k)=\frac{(-1)^n\pi^{4n}}{4^n(4n)!}\sum_{q=k}^n \frac{(-1)^q}{4^q}\binom{q}{k}\binom{4n}{4q}\sum_{\ell_1+\ell_2=q}(-1)^{\ell_1}\binom{4q}{2\ell_1}E_{2\ell_1}E_{2\ell_2}.
\end{split}
\end{equation*}

\section{Proof of Theorem \ref{main}}
Following \cite{CCE16}, for two real variables $y$ and $z$, we form the infinite product
\[
T_{m}(x;y,z)=\prod_{n=1}^{\infty}\left(1+\frac{yx^m}{(2n-1)^m}\right)\left(1-\frac{zx^m}{(2n-1)^m}\right)^{-1}.
\]
Notice that the right hand side of above product are the product of two generating functions of $y^n t(\{m\}^n)$ and $z^n t^{\star}(\{m\}^n)$, respectively. 
\begin{proof}[Proof of Theorem \ref{main}]
It is easy to see that
\[
T_m(x;y,z)=1+\sum_{n=1}^{\infty}\sum_{p+q=n}y^pz^qt(\{m\}^p)t^{\star}(\{m\}^q)x^{mn},
\] 
here in convention we let $t(\{m\}^0)=t^{\star}(\{m\}^0)=1$.

On the other hand, we obtain
\begin{equation*}
\begin{split}
T_m(x;y,z)=\prod_{n=1}^{\infty}\Big[1+&(y+z)\frac{x^m}{(2n-1)^m}+z(y+z)\frac{x^{2m}}{(2n-1)^{2m}}+\cdots \\
&+z^{k-1}(y+z)\frac{x^{km}}{(2n-1)^{km}}+\cdots\Big],
\end{split}
\end{equation*}
or
\[
T_n(x;y,z)=1+\sum_{n=1}^{\infty}\sum_{r=1}^n z^{n-r}(y+z)^rT(mn,r) x^{mn}.
\]
This implies immediately that
\begin{equation}\label{par}
\begin{split}
\sum_{r=1}^n z^{n-r}(y+z)^r T(mn,r)=\sum_{p+q=n}y^pz^qt(\{m\}^p)t^{\star}(\{m\}^q).
\end{split}
\end{equation}
Applying the differential operator $(\partial^k/ \partial y^k)$ to the both sides of above equation and then setting $y=-1, z=1$ to get
\[
k!T(mn,k)=\sum_{p+q=n}\frac{p!}{(p-k)!}(-1)^{p-k}t(\{m\}^p)t^{\star}(\{m\}^q).
\] 
Hence our first assertion of Theorem \ref{main} follows. 

If we apply the differential operator $(\partial^k/ \partial z^k)$ to the both sides of equation (\ref{par}) and take $y=1, z=-1$ afterwards, then
\[
\sum_{r=1}^k \binom{n-r}{k-r}(-1)^{n-k}T(mn,r)=\sum_{p+q=n}(-1)^{q-k}\binom{q}{k}t(\{m\}^p)t^{\star}(\{m\}^q).
\] 
By Lemma \ref{hof}, we obtain the second assertion.
\end{proof}
Taking values $y=0$ and $z=1$ into equation (\ref{par}) gives the following result.
\begin{cor}
For a pair of positive integers $n,m$ with $m\geq 2$, we have
\[
t^{\star}(\{m\}^n)=T(mn,1)+T(mn,2)+\cdots+T(mn,n).
\]
\end{cor}

\section{Evaluations of $t(\{2m\}^n)$ and $t^{\star}(\{2m\}^n)$}
Theorem \ref{main} says that the formula of $T(2mn,k)$ can be deduced directly from the evaluations of $t(\{2m\}^n)$ and $t^{\star}(\{2m\}^n)$. J.~Zhao \cite{Z15} gave the evaluation of $t(\{2\}^n)$ for any positive integer $n$ as follows
\begin{equation}\label{t2}
\begin{split}
t(\{2\}^n)=\frac{\pi^{2n}}{4^n(2n)!}.
\end{split}
\end{equation}
Then he used the theory of symmetric functions established by M.~Hoffman \cite{H16} to calculate that
\begin{equation}\label{ts2}
\begin{split}
t^{\star}(\{2\}^n)=\frac{(-1)^nE_{2n}\pi^{2n}}{4^n(2n)!},
\end{split}
\end{equation}
and for positive integers $k\leq n$,
\[
T(2n,k)=\frac{(-1)^{n-k}\pi^{2n}}{4^n (2n)!}\sum_{\ell=0}^{n-k}\binom{n-\ell}{k}\binom{2n}{2\ell}E_{2\ell},
\]
where $E_{2n}$ is the $2n$-th Euler number defined by
\[
\sec{x}=\sum_{n=0}^{\infty}\frac{(-1)^nE_{2n}}{(2n)!}x^{2n} \quad \mbox{for} \;\;|x|<\frac{\pi}{2}.
\]
This is equivalent to the formula given by (\ref{T2n}) in Section 1.

According to the parity of $m$, we divide the general evaluations of $t(\{2m\}^n)$ by two cases.
\begin{prop}\label{oddt}
For positive integers $n$ and $m$ with $m\geq 3$ odd, we let $w_m=e^{\frac{2\pi i}{m}}$ and have
\begin{equation*}
\begin{split}
t(\{2m\}^n)=\frac{\pi^{2mn}}{2^{m-1}(2mn)!}\sum_{k=1}^{(m-1)/2}\sum_{0\leq j_1<j_2<\cdots <j_k\leq m-1}(w_m^{j_1}+w_m^{j_2}+\cdots +w_m^{j_k})^{2mn}.
\end{split}
\end{equation*}
\end{prop}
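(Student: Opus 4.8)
The plan is to work with the generating function
\[
F(x)=1+\sum_{n\ge 1}t(\{2m\}^n)\,x^{2mn}=\prod_{j\ge 1}\Bigl(1+\frac{x^{2m}}{(2j-1)^{2m}}\Bigr),
\]
to factor it into a product of $m$ hyperbolic cosines indexed by roots of unity, expand each cosine into exponentials, and read off the coefficient of $x^{2mn}$; the oddness of $m$ is used both to make the factorization clean and to drive the final combinatorial collapse.

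First I would exploit $m$ odd to factor $1+u^m=\prod_{\ell=0}^{m-1}(1+w_m^{\ell}u)$: the numbers $-w_m^{\ell}$ are precisely the roots of $1+u^m$ (since $(-w_m^{\ell})^m=-1$ for $m$ odd), and the leading coefficient $\prod_{\ell}w_m^{\ell}=w_m^{m(m-1)/2}=1$ matches. Writing $\omega=e^{\pi i/m}$ for a primitive $2m$-th root of unity, so that $\omega^m=-1$ and $\omega^{2}=w_m$, and substituting $u=x^2/(2j-1)^2$ recasts each Euler factor as $\prod_{\ell}\bigl(1+(\omega^{\ell}x)^2/(2j-1)^2\bigr)$. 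Interchanging this finite product with the product over $j$ and invoking $\prod_{j\ge1}\bigl(1+a^2/(2j-1)^2\bigr)=\cosh(\pi a/2)$ (namely $\cos(\pi z/2)=\prod_j(1-z^2/(2j-1)^2)$ at $z=ia$) gives $F(x)=\prod_{\ell=0}^{m-1}\cosh\!\bigl(\tfrac{\pi}{2}\omega^{\ell}x\bigr)$. Expanding each factor as $\tfrac12(e^{\cdot}+e^{-\cdot})$ and extracting the coefficient of $x^{2mn}$ then yields
\[
t(\{2m\}^n)=\frac{(\pi/2)^{2mn}}{2^{m}(2mn)!}\sum_{\epsilon\in\{\pm1\}^{m}}\Bigl(\sum_{\ell=0}^{m-1}\epsilon_{\ell}\,\omega^{\ell}\Bigr)^{2mn}.
\]

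The hard part will be turning this signed sum over the $2m$-th roots $\omega^{\ell}$ into the advertised subset sum over the $m$-th roots $w_m^{j}$, which is where the factor $2^{m-1}$ and the cutoff $k\le(m-1)/2$ must materialise. The crucial observation is that, because $m$ is odd, every $\omega^{\ell}$ with $0\le\ell\le m-1$ equals $\pm w_m^{j}$ for a unique $j$, and the induced map $\ell\mapsto j(\ell)$ is a bijection of $\mathbb Z/m$ (two indices sent to the same root with opposite signs would differ by $m$, impossible in this range). Since the multiset $\{\sum_{\ell}\epsilon_{\ell}a_{\ell}\}_{\epsilon}$ of signed sums is unchanged under permuting the $a_{\ell}$ and flipping their signs, this bijection identifies $\sum_{\epsilon}\bigl(\sum_{\ell}\epsilon_{\ell}\omega^{\ell}\bigr)^{2mn}$ with $\sum_{\eta\in\{\pm1\}^m}\bigl(\sum_{j}\eta_{j}w_m^{j}\bigr)^{2mn}$. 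Substituting $\eta_{j}=2\delta_{j}-1$ and using $\sum_{j}w_m^{j}=0$ collapses $\sum_{j}\eta_{j}w_m^{j}$ to $2\sum_{j\in S}w_m^{j}$ with $S=\{j:\delta_{j}=1\}$, so the sum becomes $2^{2mn}\sum_{S}\bigl(\sum_{j\in S}w_m^{j}\bigr)^{2mn}$ and the factor $2^{2mn}$ cancels $(\pi/2)^{2mn}$. Finally the complementation symmetry $\sum_{j\in S^{c}}w_m^{j}=-\sum_{j\in S}w_m^{j}$, together with $2mn$ even and $m$ odd, lets me pair $S$ with $S^{c}$ (the empty and full sets contributing $0$), halving the sum onto $1\le|S|\le(m-1)/2$ and producing the stated formula after collecting constants.

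I expect two points to need genuine care: the routine but necessary justification of the product interchange and the term-by-term $\cosh$ identification, and above all the sign book-keeping in the bijection $\ell\mapsto j(\ell)$, which is the real content of the argument. As a calibration I would check $m=3$, where every surviving subset is a singleton and each $(w_3^{j})^{6n}=1$, recovering $t(\{6\}^n)=3\pi^{6n}/\bigl(4\,(6n)!\bigr)$ and confirming the constants.
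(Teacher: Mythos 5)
Your proof is correct and follows essentially the same route as the paper: both convert the infinite product into a product of $m$ cosine-type factors indexed by roots of unity, expand into $2^{m}$ signed exponential terms, and collapse the signed sums to subset sums of $m$-th roots of unity via $\sum_{j}w_m^j=0$ together with the complement pairing $S\leftrightarrow S^c$ (which is exactly where $m$ odd and the cutoff $k\le (m-1)/2$ enter). The only cosmetic differences are that you use the non-alternating generating function and factor $1+u^m$ directly into $\cosh(\tfrac{\pi}{2}\omega^{\ell}x)$ factors over $2m$-th roots of unity, which forces your extra (correct) bijection $\omega^{\ell}\mapsto \pm w_m^{j(\ell)}$, whereas the paper writes the odd-index product as a quotient of sine products to land on $\prod_{k}\cos(w_m^k\pi x/2)$ and applies the cosine product-to-sum formula, reaching the same subset sum without that detour.
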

\begin{proof}
For $x\neq 0$ and $|x|<1$, we have
\begin{equation*}
\begin{split}
1+\sum_{n=1}^{\infty}(-1)^n t(\{2m\}^n)x^{2mn}&=\prod_{j=1}^{\infty}\left(1-\frac{x^{2m}}{(2j-1)^{2m}}\right)\\
&=\prod_{j=1}^{\infty}\left(1-\frac{x^{2m}}{j^{2m}}\right) \Big/ \left(1-\frac{x^{2m}}{(2j)^{2m}}\right).
\end{split}
\end{equation*}
Note that
\begin{equation*}
\begin{split}
\prod_{j=1}^{\infty}\left(1-\frac{x^{2m}}{j^{2m}}\right)&=\prod_{j=1}^{\infty}\prod_{k=0}^{m-1} \left(1-\frac{(w_m^k x)^2}{j^2}\right)\\
&=\prod_{k=0}^{m-1} \frac{\sin{(w_m^k \pi x)}}{w_m^k \pi x}\\
&=\frac{1}{(\pi x)^{m}}\prod_{k=0}^{m-1} \sin{(w_m^k \pi x)}.
\end{split}
\end{equation*}
Let $y=\pi x/2$. Thus,
\[
\prod_{j=1}^{\infty}\left(1-\frac{x^{2m}}{j^{2m}}\right) \Big/ \left(1-\frac{x^{2m}}{(2j)^{2m}}\right)=\prod_{k=0}^{m-1} \cos{(w_m^k y)}.
\]
We express the product of cosine functions into a linear combination of cosine functions as
\[
\frac{1}{2^{m-1}}\sum_{\varepsilon_j=\pm 1,\; 1\leq j\leq m-1}\cos{(w_m^{m-1}+\varepsilon_1w_m^{m-2}+\cdots+\varepsilon_{m-1})y}.
\]
It can be rewritten as
\[
\frac{1}{2^{m-1}}\sum_{k=1}^{(m-1)/2}\sum_{0\leq j_1<j_2<\cdots< j_k\leq m-1}\cos{(2y(w_m^{j_1}+w_m^{j_2}+\cdots+w_m^{j_k}))},
\]
or
\[
\frac{1}{2^{m-1}}\sum_{k=1}^{(m-1)/2}\sum_{0\leq j_1<j_2<\cdots< j_k\leq m-1}\cos{(\pi x(w_m^{j_1}+w_m^{j_2}+\cdots+w_m^{j_k}))},
\]
since $w_m^m=1$ and for $m\geq 3$ that $w_m^{m-1}+w_m^{m-2}+\cdots+w_m+1=0$. Extracting the coefficient of $x^{2mn}$ from the expression leads to the evaluation of $t(\{2m\}^n)$.
\end{proof}

\begin{prop}\label{event}
For positive integers $n$ and $m$ with $m\geq 2$ even, we let $w=w_{2m}=e^{\frac{2\pi i}{2m}}$ and have
\begin{equation*}
\begin{split}
t(\{2m\}^n)=\frac{(-1)^n \pi^{2mn}}{2^{2mn+m-2}(2mn)!}\Rea{\left(\sum_{\boldsymbol{\varepsilon} \in A}(w^{m-1}+\varepsilon_1w^{m-2}+\cdots+\varepsilon_{m-1})^{2mn}\right)},
\end{split}
\end{equation*}
where $\boldsymbol{\varepsilon}=(\varepsilon_1, \varepsilon_2, \ldots, \varepsilon_{m-1})$ and for each $1\leq j\leq m-1$ we have either $\varepsilon_{j}=1$ or $\varepsilon_j=-1$. Here $A$ is the set of elements of the form $w^{m-1}+\varepsilon_1w^{m-2}+\cdots+\varepsilon_{m-1}$ such that the number of $-1$ in $\boldsymbol{\varepsilon}$ is even.
\end{prop}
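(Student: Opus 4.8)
The plan is to rerun the machinery of the proof of Proposition \ref{oddt}, changing only the root of unity used in the factorization, and then to settle a genuinely new conjugation/parity argument at the very end. First I would start again from
\[
1+\sum_{n=1}^{\infty}(-1)^n t(\{2m\}^n)x^{2mn}=\prod_{j=1}^{\infty}\left(1-\frac{x^{2m}}{j^{2m}}\right)\Big/\left(1-\frac{x^{2m}}{(2j)^{2m}}\right).
\]
The one essential change from the odd case is the factorization $1-x^{2m}/j^{2m}=\prod_{k=0}^{m-1}\bigl(1-(w^kx)^2/j^2\bigr)$: this needs $w^{2k}$ to run over all $m$-th roots of unity as $k=0,\dots,m-1$, which fails for $w_m$ when $m$ is even but holds for $w=w_{2m}$ since $w^{2k}=e^{2\pi ik/m}$. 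Euler's sine product then gives $\prod_j(1-x^{2m}/j^{2m})=\tfrac{1}{(\pi x)^m\prod_k w^k}\prod_k\sin(w^k\pi x)$ and the analogous expression for the $(2j)$-product; in the quotient the phase $\prod_k w^k=e^{\pi i(m-1)/2}$ cancels, and $\sin(2\theta)=2\sin\theta\cos\theta$ yields, with $y=\pi x/2$,
\[
1+\sum_{n=1}^{\infty}(-1)^n t(\{2m\}^n)x^{2mn}=\prod_{k=0}^{m-1}\cos(w^k y).
\]

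Next I would expand the product of cosines. Writing $T_{\boldsymbol{\delta}}=\sum_{k=0}^{m-1}\delta_k w^k$ for $\boldsymbol{\delta}\in\{\pm1\}^m$, we have $\prod_k\cos(w^k y)=2^{-m}\sum_{\boldsymbol{\delta}}e^{iyT_{\boldsymbol{\delta}}}$; pairing $\boldsymbol{\delta}$ with $-\boldsymbol{\delta}$ and fixing the coefficient of $w^{m-1}$ to be $+1$ gives $2^{-(m-1)}\sum_{\boldsymbol{\varepsilon}}\cos(S_{\boldsymbol{\varepsilon}}y)$ with $S_{\boldsymbol{\varepsilon}}=w^{m-1}+\varepsilon_1w^{m-2}+\cdots+\varepsilon_{m-1}$. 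Extracting the coefficient of $x^{2mn}$ from each $\cos(S_{\boldsymbol{\varepsilon}}y)$ produces a factor $(-1)^{mn}(\pi/2)^{2mn}S_{\boldsymbol{\varepsilon}}^{2mn}/(2mn)!$, and $(-1)^{mn}=1$ because $m$ is even. This leaves the intermediate evaluation
\[
t(\{2m\}^n)=\frac{(-1)^n\pi^{2mn}}{2^{2mn+m-1}(2mn)!}\sum_{\boldsymbol{\varepsilon}}S_{\boldsymbol{\varepsilon}}^{2mn},
\]
where the sum runs over all $2^{m-1}$ sign vectors $\boldsymbol{\varepsilon}$.

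The remaining and central step is to show that $\sum_{\boldsymbol{\varepsilon}}S_{\boldsymbol{\varepsilon}}^{2mn}=2\,\Rea\bigl(\sum_{\boldsymbol{\varepsilon}\in A}S_{\boldsymbol{\varepsilon}}^{2mn}\bigr)$, which accounts simultaneously for the restriction to $A$ and for the drop of the power of two from $2^{m-1}$ to $2^{m-2}$. The tool is complex conjugation combined with $w^m=e^{\pi i}=-1$: from $\overline{w^k}=w^{-k}=-w^{m-k}$ one gets $\overline{S_{\boldsymbol{\varepsilon}}}=-\sum_k\delta_k w^{m-k}$, which after reindexing is again of the form $\sum_k\delta'_k w^k$ with $\delta'_0=\delta_0$ and $\delta'_k=-\delta_{m-k}$ for $1\le k\le m-1$; normalizing the leading coefficient back to $+1$ (replacing the sign vector by its negative if necessary, which does not alter $S^{2mn}$) defines an involution on the index set with $S^{2mn}\mapsto\overline{S^{2mn}}$. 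The key computation is that this involution \emph{reverses} the parity of the number of $-1$'s in $\boldsymbol{\varepsilon}$: a direct count shows the two parities differ by $m-1$ modulo $2$, which is odd precisely because $m$ is even. Hence conjugation is a bijection between $A$ and its complement, so $\sum_{A^c}S^{2mn}=\overline{\sum_A S^{2mn}}$ and $\sum_{\boldsymbol{\varepsilon}}S^{2mn}=\sum_A S^{2mn}+\overline{\sum_A S^{2mn}}=2\,\Rea\bigl(\sum_A S^{2mn}\bigr)$; substituting into the intermediate evaluation gives the stated formula. The \textbf{main obstacle} is exactly this parity-versus-conjugation bookkeeping, and it is where the even case departs from Proposition \ref{oddt}: there $\sum_{k=0}^{m-1}w_m^k=0$ collapses the sum, whereas here $\sum_{k=0}^{m-1}w_{2m}^k\neq0$ forces the passage to real parts instead.
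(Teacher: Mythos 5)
Your proposal is correct and follows essentially the same route as the paper: the same reduction to $\prod_{k=0}^{m-1}\cos(w^k y)$, the same expansion into $2^{-(m-1)}\sum_{\boldsymbol{\varepsilon}}\cos(S_{\boldsymbol{\varepsilon}}y)$ and coefficient extraction, and the same final idea that complex conjugation induces a bijection between $A$ and $A^c$, yielding the factor $2\Rea(\cdot)$. The only difference is cosmetic: where the paper verifies the bijection by an explicit case analysis on $\varepsilon_{m-2}$, you use the uniform identity $\overline{w^k}=-w^{m-k}$ together with a parity count (the shift by $m-1\equiv 1\pmod 2$, with negation harmless since $m$ is even), which is a cleaner way to organize the same argument.
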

\begin{proof}
As in the proof of Proposition \ref{oddt}, we have
\begin{equation*}
\begin{split}
1+\sum_{n=1}^{\infty}(-1)^nt(\{2m\}^n)x^{2mn}&=\prod_{j=1}^{\infty}\left(1-\frac{x^{2m}}{(2j-1)^{2m}}\right)\\
&=\prod_{k=0}^{m-1} \cos{(w^ky)},
\end{split}
\end{equation*}
where $y=\pi x/2$. Now we express the product of cosine functions into a linear combination of cosine functions 
\begin{equation*}
\begin{split}
\prod_{k=0}^{m-1} \cos{(w^k y)}=\frac{1}{2^{m-1}}\sum_{\varepsilon_j=\pm 1,\; 1\leq j\leq m-1}\cos{(w^{m-1}+\varepsilon_1w^{m-2}+\cdots+\varepsilon_{m-1})y}.
\end{split}
\end{equation*}
It immediately follows that
\begin{equation}\label{eq2}
\begin{split}
t(\{2m\}^n)=\frac{(-1)^n}{2^{m-1}}\sum_{\varepsilon_j=\pm 1,\; 1\leq j\leq m-1}\frac{(-1)^{mn}\pi^{2mn}}{(2mn)!2^{2mn}}(w^{m-1}+\varepsilon_1w^{m-2}+\cdots+\varepsilon_{m-1})^{2mn}.
\end{split}
\end{equation}
For any $B_j=w^{m-1}+\varepsilon_1 w^{m-2}+\cdots +\varepsilon_{m-1} \in A^c$, where $A^c$ denote the complement of the set $A$. If $\varepsilon_{m-2}=1$, then
\begin{equation*}
\begin{split}
-\overline{B_j}&=w^{\frac{m}{2}}\left(w^{\frac{m}{2}-1}+\varepsilon_{m-3}w^{\frac{m}{2}-2}+\cdots+\varepsilon_{1}w^{2-\frac{m}{2}}+w^{1-\frac{m}{2}}+i\varepsilon_{m-1}\right)\\
&=w^{m-1}+\varepsilon_{m-3}w^{m-2}+\cdots+\varepsilon_1w^2+w-\varepsilon_{m-1}.
\end{split}
\end{equation*}
It implies $-\overline{B_j}\in A$ since the number of $-1$ in $\varepsilon_{m-3}, \varepsilon_{m-4}, \ldots, \varepsilon_1, 1, -\varepsilon_{m-1}$ is even. 

If $\varepsilon_{m-2}=-1$, then 
\begin{equation*}
\begin{split}
\overline{B_j}&=-w^{\frac{m}{2}}\left(-w^{\frac{m}{2}-1}+\varepsilon_{m-3}w^{\frac{m}{2}-2}+\cdots+\varepsilon_{1}w^{2-\frac{m}{2}}+w^{1-\frac{m}{2}}+i\varepsilon_{m-1}\right)\\
&=w^{m-1}-\varepsilon_{m-3}w^{m-2}-\cdots-\varepsilon_1w^2-w+\varepsilon_{m-1}.
\end{split}
\end{equation*}
Since the number of $-1$ in $-\varepsilon_{m-3},-\varepsilon_{m-4},\ldots, -\varepsilon_1,-1,\varepsilon_{m-1}$ is even, we have $\overline{B_j}\in A$. Thus, there is a one-to-one corresponding from $A^c$ to $A$. Let $A=\{A_1, A_2, \ldots, A_{2^{m-2}}\}$ and $A^c=\{B_1, B_2, \ldots, B_{2^{m-2}}\}$, then 
\begin{equation*}
\begin{split}
&\sum_{\varepsilon_j=\pm 1,\; 1\leq j\leq m-1}(w^{m-1}+\varepsilon_1w^{m-2}+\cdots+\varepsilon_{m-1})^{2mn}\\
&=\;\sum_{j=1}^{2^{m-2}}(A_j^{2mn}+B_j^{2mn})\\
&=\;\sum_{j=1}^{2^{m-2}}(A_j^{2mn}+\overline{A_j}^{2mn})\\
&=\;2\Rea{\left(\sum_{\boldsymbol{\varepsilon} \in A}(w^{m-1}+\varepsilon_1w^{m-2}+\cdots+\varepsilon_{m-1})^{2mn}\right)}.
\end{split}
\end{equation*}
From which and (\ref{eq2}) our assertion follows.
\end{proof}
By Proposition \ref{oddt} and \ref{event}, it immediately follows for any positive integer $n$ that
\begin{equation*}
\begin{split}
&t(\{4\}^n)=\frac{\pi^{4n}}{4^n(4n)!}, \; t(\{6\}^n)=\frac{3\pi^{6n}}{4\cdot(6n)!}\quad \mbox{and}\\
&t(\{8\}^n)=\frac{\pi^{8n}}{2\cdot(8n)!}\left[\left(1+\frac{1}{\sqrt{2}}\right)^{4n}+\left(1-\frac{1}{\sqrt{2}}\right)^{4n}\right].
\end{split}
\end{equation*}

There is also a slight difference when $m$ is even or odd in the general formula of $t^{\star}(\{2m\}^n)$.
\begin{prop}\label{oddts}
For positive integers $n,m$ with $m$ odd, we have
\begin{equation*}
\begin{split}
t^{\star}(\{2m\}^n)=\frac{(-1)^n\pi^{2mn}}{2^{2mn}}\sum_{|\boldsymbol{\ell}|=mn}\prod_{j=0}^{m-1} \frac{E_{2\ell_j}}{(2\ell_j)!}w_m^{2j\ell_{j+1}},
\end{split}
\end{equation*}
where $w_m=e^{\frac{2\pi i}{m}}$ and the summation ranges over all nonnegative integers $\ell_1, \ell_2, \ldots, \ell_m$ such that $\ell_1+\ell_2+\cdots+\ell_m=mn$. 
\end{prop}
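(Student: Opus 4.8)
The plan is to treat this as the reciprocal counterpart of Proposition \ref{oddt}. Since the generating function of the star values is the reciprocal of that of the ordinary values, I would start from
\[
1+\sum_{n=1}^{\infty}t^{\star}(\{2m\}^n)x^{2mn}=\prod_{j=1}^{\infty}\left(1-\frac{x^{2m}}{(2j-1)^{2m}}\right)^{-1}
\]
and invoke the factorization already obtained in the proof of Proposition \ref{oddt} for $m$ odd, namely $\prod_{j=1}^{\infty}\bigl(1-x^{2m}/(2j-1)^{2m}\bigr)=\prod_{k=0}^{m-1}\cos(w_m^k y)$ with $y=\pi x/2$. Taking reciprocals identifies the star generating function with the product of secants $\prod_{k=0}^{m-1}\sec(w_m^k y)$.

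Next I would expand each factor through the Euler-number series $\sec t=\sum_{\ell\ge 0}\frac{(-1)^\ell E_{2\ell}}{(2\ell)!}t^{2\ell}$, so that $\sec(w_m^k y)=\sum_{\ell\ge 0}\frac{(-1)^\ell E_{2\ell}}{(2\ell)!}w_m^{2k\ell}y^{2\ell}$. Multiplying the $m$ factors and collecting terms by the total exponent $L=\ell_0+\cdots+\ell_{m-1}$ of $y$ gives
\[
\prod_{k=0}^{m-1}\sec(w_m^k y)=\sum_{L\ge 0}(-1)^{L}y^{2L}\sum_{\ell_0+\cdots+\ell_{m-1}=L}\,\prod_{k=0}^{m-1}\frac{E_{2\ell_k}}{(2\ell_k)!}w_m^{2k\ell_k}.
\]
Putting $y=\pi x/2$ and reading off the coefficient of $x^{2mn}$ forces $L=mn$; since $m$ is odd we have $(-1)^{mn}=(-1)^n$, and $(\pi/2)^{2mn}=\pi^{2mn}/2^{2mn}$, which reproduces the stated formula once the summation indices $\ell_0,\ldots,\ell_{m-1}$ are relabelled as $\ell_1,\ldots,\ell_m$.

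I do not anticipate a genuine obstacle: the entire argument is a single coefficient extraction from a product of Euler-number series, running parallel to Proposition \ref{oddt} but for the reciprocal product. The only points needing care are the bookkeeping of the multi-index convolution, the tracking of the sign $(-1)^{mn}$, and the observation that---in contrast with the even case of Proposition \ref{event}---no pairing of complex-conjugate terms or passage to the real part is required here, because for $m$ odd the clean factorization into $m$ cosine (hence secant) factors is available directly.
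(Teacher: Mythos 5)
Your proposal is correct and follows essentially the same route as the paper: both reduce the star generating function to the secant product $\prod_{k=0}^{m-1}\sec(w_m^k\pi x/2)$ (the paper via the cosecant form of the sine product, you by taking the reciprocal of the cosine product from Proposition \ref{oddt}, which is the same computation) and then extract the coefficient of $x^{2mn}$ from the Euler-number expansion. Your explicit tracking of the sign $(-1)^{L}$ and the parity observation $(-1)^{mn}=(-1)^n$ for $m$ odd correctly supplies the details the paper leaves to the reader.
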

\begin{proof}
It is straightforward that
\begin{equation*}
\begin{split}
1+\sum_{n=1}^{\infty}t^{\star}(\{2m\}^n)x^{2mn}&=\prod_{j=1}^{\infty}\left(1-\frac{x^{2m}}{(2j-1)^{2m}}\right)^{-1}\\
&=\prod_{j=1}^{\infty}\left(1-\frac{x^{2m}}{j^{2m}}\right)^{-1} \Big/ \left(1-\frac{x^{2m}}{(2j)^{2m}}\right)^{-1}.
\end{split}
\end{equation*}
Let $w_m=e^{\frac{2\pi i}{m}}$. Note that
\begin{equation*}
\begin{split}
\prod_{j=1}^{\infty}\left(1-\frac{x^{2m}}{j^{2m}}\right)^{-1}&=\prod_{j=1}^{\infty}\prod_{k=0}^{m-1}\left(1-\frac{(w_m^k x)^2}{j^2}\right)^{-1}\\
&=w_m^{\frac{m(m-1)}{2}}(\pi x)^m \prod_{k=0}^{m-1}\csc{(w_m^k \pi x)}.
\end{split}
\end{equation*}
Thus we have
\[
1+\sum_{n=1}^{\infty}t^{\star}(\{2m\}^n)x^{2mn}=\prod_{k=0}^{m-1}\sec{\left(\frac{w_m^k \pi x}{2}\right)}.
\]
Comparing the coefficient of $x^{2mn}$ of the above equation gives the desired evaluation of $t^{\star}(\{2m\}^n)$ for $m$ is odd. 
\end{proof}
Proposition \ref{oddts} implies that, in particular when $m=1$, the formula (\ref{ts2}). In addition, we have
\[
t^{\star}(\{6\}^n)=\frac{(-1)^n\pi^{6n}}{2^{6n}}\sum_{|\boldsymbol{\ell}|=3n}\frac{E_{2\ell_1}E_{2\ell_2}E_{2\ell_3}}{(2\ell_1)!(2\ell_2)!(2\ell_3)!}\left(-\frac{1}{2}+\frac{\sqrt{3}}{2}i\right)^{2\ell_2+4\ell_3}.
\]

\begin{prop}\label{events}
Let $w=e^{\frac{2\pi i}{2m}}$. For positive integers $n,m$ with $m$ even, we have
\begin{equation*}
\begin{split}
t^{\star}(\{2m\}^n)=\frac{\pi^{2mn}}{2^{2mn}}\sum_{|\boldsymbol{\ell}|=mn}\prod_{j=0}^{m-1} \frac{E_{2\ell_j}}{(2\ell_j)!}w^{2j\ell_{j+1}}.
\end{split}
\end{equation*}
\end{prop}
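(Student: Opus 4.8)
The plan is to follow the route of the proof of Proposition \ref{oddts}, the one genuinely new ingredient being the choice of root of unity in the factorization of $1-x^{2m}/j^{2m}$. Starting from the generating function
\[
1+\sum_{n=1}^{\infty}t^{\star}(\{2m\}^n)x^{2mn}=\prod_{j=1}^{\infty}\left(1-\frac{x^{2m}}{(2j-1)^{2m}}\right)^{-1}=\prod_{j=1}^{\infty}\left(1-\frac{x^{2m}}{j^{2m}}\right)^{-1}\Big/\left(1-\frac{x^{2m}}{(2j)^{2m}}\right)^{-1},
\]
I would first record the degree-$2m$ factorization $1-x^{2m}/j^{2m}=\prod_{k=0}^{m-1}\bigl(1-(w^k x)^2/j^2\bigr)$. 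The crux is that this demands the set $\{w^{2k}:0\le k\le m-1\}$ enumerate all $m$-th roots of unity exactly once; with $w=w_{2m}=e^{2\pi i/(2m)}$ one has $w^{2k}=w_m^k$, which does so, whereas the choice $w=w_m$ used in the odd case fails precisely when $m$ is even, since then $w_m^2$ is only a primitive $(m/2)$-th root and generates the $(m/2)$-th roots twice. This is the single step where the parity of $m$ forces a different argument, and it is the main point to get right.

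With this factorization in hand, I would apply $\prod_{j\ge1}(1-z^2/j^2)^{-1}=\pi z\csc(\pi z)$ to each factor, obtaining
\[
\prod_{j=1}^{\infty}\left(1-\frac{x^{2m}}{j^{2m}}\right)^{-1}=w^{m(m-1)/2}(\pi x)^{m}\prod_{k=0}^{m-1}\csc(w^{k}\pi x),
\]
and likewise the even-indexed product with $x$ replaced by $x/2$. Forming the quotient, the prefactors $w^{m(m-1)/2}$ cancel, a factor $2^{m}$ survives from $(\pi x)^{m}/(\pi x/2)^{m}$, and the double-angle identity $\sin(w^{k}\pi x)=2\sin(w^{k}\pi x/2)\cos(w^{k}\pi x/2)$ collapses each ratio of cosecants into $\tfrac12\sec(w^{k}\pi x/2)$. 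The $2^{m}$ and the $m$ factors of $\tfrac12$ cancel, leaving
\[
1+\sum_{n=1}^{\infty}t^{\star}(\{2m\}^n)x^{2mn}=\prod_{k=0}^{m-1}\sec\left(\frac{w^{k}\pi x}{2}\right),
\]
exactly as in the odd case but now with $w=w_{2m}$.

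Finally, I would expand each factor through the Euler-number series $\sec u=\sum_{\ell\ge0}\frac{(-1)^{\ell}E_{2\ell}}{(2\ell)!}u^{2\ell}$, so that the factor indexed by $j$ contributes $\frac{(-1)^{\ell_{j+1}}E_{2\ell_{j+1}}}{(2\ell_{j+1})!}w^{2j\ell_{j+1}}(\pi x/2)^{2\ell_{j+1}}$, and then read off the coefficient of $x^{2mn}$. This produces a sum over $(\ell_1,\ldots,\ell_m)$ with $|\boldsymbol{\ell}|=mn$, the powers of $\pi/2$ assembling into $\pi^{2mn}/2^{2mn}$ and the accumulated sign becoming $(-1)^{\ell_1+\cdots+\ell_m}=(-1)^{mn}$. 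Here the parity enters a second time: because $m$ is even, $(-1)^{mn}=1$, which is exactly why the even-$m$ formula carries no $(-1)^n$ prefactor, in contrast with Proposition \ref{oddts}. Matching the resulting expression against the stated formula then finishes the proof.
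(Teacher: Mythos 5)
Your proposal is correct and follows essentially the same route as the paper, which simply invokes the argument of Proposition \ref{oddts} with $w=e^{2\pi i/(2m)}$ to obtain $1+\sum_{n\ge1}t^{\star}(\{2m\}^n)x^{2mn}=\prod_{k=0}^{m-1}\sec(w^k\pi x/2)$ and then reads off the coefficient of $x^{2mn}$. Your added explanations of why the even case forces the $2m$-th root of unity in the factorization and why the $(-1)^{mn}$ sign becomes $+1$ are both accurate and make explicit what the paper leaves implicit.
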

\begin{proof}
As in the proof of Theorem \ref{oddts}, we have
\[
1+\sum_{n=1}^{\infty}t^{\star}(\{2m\}^n)x^{2mn}=\prod_{k=0}^{m-1}\sec{\left(\frac{w^k \pi x}{2}\right)},
\]
where $w=e^{\frac{2\pi i}{2m}}$ in this case. From which we extract the coefficient of $x^{2mn}$ to obtain the desired evaluation of $t^{\star}(\{2m\}^n)$ for even $m$. 
\end{proof}
For example, we have
\begin{equation*}
\begin{split}
&t^{\star}(\{4\}^n)=\frac{\pi^{4n}}{2^{4n}(4n)!}\sum_{\ell=0}^{2n}(-1)^{\ell}\binom{4n}{2\ell}E_{2\ell}E_{4n-2\ell};\\
&t^{\star}(\{8\}^n)=\frac{\pi^{8n}}{2^{8n}}\sum_{|\boldsymbol{\ell}|=4n}\frac{E_{2\ell_1}E_{2\ell_2}E_{2\ell_3}E_{2\ell_4}}{(2\ell_1)!(2\ell_2)!(2\ell_3)!(2\ell_4)!}i^{\ell_2+2\ell_3+3\ell_4}.
\end{split}
\end{equation*}

\providecommand{\bysame}{\leavevmode \hbox to3em%
{\hrulefill}\thinspace}

\end{document}